\theoremstyle{plain}
\newtheorem{theorem}{Theorem}[section]
\newtheorem*{theorem-A}{Theorem A}
\newtheorem*{theorem-B}{Theorem B}
\newtheorem*{theorem-C}{Theorem C}
\newtheorem*{theorem-D}{Theorem D}
\newtheorem*{theorem-E}{Theorem E}
\newtheorem*{theorem-F}{Theorem F}
\theoremstyle{definition}
\def\rn{\mathbb R\sp n}
\def\R{\mathbb R}
\def\N{\mathbb N}
\def\S{\mathbb S}
\def\limsup{\operatornamewithlimits{lim\,sup}}
\def\o{\Omega}
\def\ass{b}
\def\Ass{B}
\newtoks\by
\newtoks\paper
\newtoks\book
\newtoks\jour
\newtoks\yr
\newtoks\pages
\newtoks\vol
\newtoks\publ
\newtoks\eds
\newtoks\proc
\newtoks\no
\def\ota{{\hbox{???}}}
\def\cLear{\by=\ota\paper=\ota\book=\ota\jour=\ota\yr=\ota
\pages=\ota\vol=\ota\publ=\ota}
\def\endpaper{\the\by, \textit{\the\paper},
{\the\jour} \textbf{\the\vol} (\the\yr), \the\pages.\cLear}
\def\endbook{\the\by, \textit{\the\book}, \the\publ.\cLear}
\def\endprep{\the\by, \textit{\the\paper}, \the\jour.\cLear}
\def\endproc{\the\by, \textit{\the\paper}, \the\publ, \the\pages.\cLear}
\def\name#1#2{#1 #2}
\def\et{ and }
\numberwithin{equation}{section}
\newcommand{\norm}[1]{{\left\vert\kern-0.25ex\left\vert\kern-0.25ex\left\vert #1
    \right\vert\kern-0.25ex\right\vert\kern-0.25ex\right\vert}}
\begin{document}

\title[Limit as $s\to 0^+$ of fractional Orlicz-Sobolev spaces]{On the limit as $s\to 0^+$ of fractional Orlicz-Sobolev spaces}

\author {Angela Alberico, Andrea Cianchi, Lubo\v s Pick and Lenka Slav\'ikov\'a}

\address{Angela Alberico, Istituto per le Applicazioni del Calcolo ``M. Picone''\\
Consiglio Nazionale delle Ricerche \\
Via Pietro Castellino 111\\
80131 Napoli\\
Italy} \email{a.alberico@iac.cnr.it}

\address{Andrea Cianchi, Dipartimento di Matematica e Informatica \lq\lq U. Dini"\\
Universit\`a di Firenze\\
Viale Morgagni 67/a\\
50134 Firenze\\
Italy} \email{andrea.cianchi@unifi.it}

\address{Lubo\v s Pick, Department of Mathematical Analysis\\
Faculty of Mathematics and Physics\\
Charles University\\
Sokolovsk\'a~83\\
186~75 Praha~8\\
Czech Republic} \email{pick@karlin.mff.cuni.cz}

\address{Lenka Slav\'ikov\'a, Mathematical Institute, University of Bonn, Endenicher Allee 60,
53115 Bonn, Germany}
\email{slavikova@karlin.mff.cuni.cz}
\urladdr{}

\subjclass[2000]{46E35, 46E30}
\keywords{Fractional Orlicz--Sobolev space; limits of smoothness parameters}

\begin{abstract}
An extended version of the Maz'ya-Shaposhnikova theorem on the limit as $s\to 0^+$ of the Gagliardo-Slobodeckij fractional seminorm is established in the  Orlicz space setting.  Our result holds in fractional Orlicz-Sobolev spaces associated with   Young functions satisfying the $\Delta_2$-condition, and, as shown by counterexamples,  it may fail if this condition is dropped.
\end{abstract}


\maketitle

\section{Introduction and main results}


Pivotal instances of spaces  of functions endowed with a non-integer order of smoothness are the Besov spaces, defined in terms of norms of differences, the Triebel--Lizorkin spaces, whose notion relies upon the Fourier transform, the Bessel potential spaces, based on representation formulas via potential operators, and the Gagliardo-Slobodeckij spaces, defined in terms of fractional difference quotients.
Relations among these families of spaces
are known -- see e.g.  \cite[Remark~2.1.1]{ST} for a survey of results with this regard. It is also well known that, with the exception of the Bessel potential spaces, they do not agree, in general, with the classical integer-order Sobolev spaces when the order of smoothness is formally set to an integer.
\par
In particular, this drawback   affects the Gagliardo-Slobodeckij spaces $W^{s,p}(\rn)$, which are defined, for $n \in \mathbb N$, $s \in (0,1)$ and $p \in [1, \infty)$, via a seminorm depending on an integral over $\rn \times \rn$ of an $s$-th-order difference quotient. However, some twenty years ago it was discovered that a suitably normalized    Gagliardo-Slobodeckij seminorm in $W^{s,p}(\rn)$ recovers, in the limit as $s \to 1^-$ or $s\to 0^+$, its integer-order counterpart.
\par The result was first established at the endpoint $1^-$ by
Bourgain, Brezis and Mironescu in \cite{BBM1, BBM2}. In those papers it is shown that the seminorm in $W^{s,p}(\rn)$ of a function $u$, times $(1-s)^{1/p}$, approaches the $L^p$ norm of $\nabla u$ as $s \to 1^-$
 (up to a multiplicative constant depending only on $n$).
\par
The problem concerning  the opposite endpoint $0^+$   was solved by Maz'ya and Shaposhnikova. In  \cite{MS} they proved that
\begin{equation}\label{feb1}
\lim_{s\to 0^+}  s \int_{\rn} \int_{\rn}\left(\frac{|u(x)-u(y)|}{|x-y|^s}\right)^p\frac{dx\,dy}{|x-y|^n} = \frac{2\, \omega_n}{np} \int_{\rn} |u(x)|^p\; dx
\end{equation}
for every function $u$ decaying to $0$ near infinity and making the double integral finite for some $s\in (0,1)$.
Here, $\omega_n$ denotes the Lebesgue measure of the unit ball in $\rn$.
%
\par The present paper   deals with a version of property \eqref{feb1}   in the broader framework of  fractional Orlicz-Sobolev spaces. These spaces extend the spaces $W^{s,p}(\rn)$ in that the role of the power function $t^p$ is played by a more general Young function $A: [0, \infty) \to [0, \infty)$, namely a convex function vanishing at $0$. Specifically, we address the problem of the existence of
\begin{equation}\label{feb2}
\lim_{s\to 0^+}  s\int_{\R^n} \int_{\rn} A\left(\frac{|u(x)-u(y)|}{|x-y|^s}\right)\frac{dx\,dy}{|x-y|^{n}}\,,
\end{equation}
 and of its value in the affirmative case.  The ambient space for $u$ is  $\bigcup _{s\in (0,1)}V^{s,A}_d(\rn)$, where $V^{s,A}_d(\rn)$ denotes the space of those measurable functions $u$ in $\rn$ which render the double  integral in \eqref{feb2} finite, and decay to $0$ near infinity, in the sense that
$$|\{x \in \rn: |u(x)|>t\}| <\infty \qquad \text{for every $t>0$.}$$
Here, $|E|$ stands for the Lebesgue measure of a set $E\subset \rn$.
\par A partial result in this connection is contained in the recent contribution \cite{CMSV}, where    bounds for the $\liminf_{s\to 0^+}$ and $\limsup_{s\to 0^+}$ of the expression under the limit in \eqref{feb2} are given for Young functions $A$ satisfying the $\Delta_2$-condition. Recall that this condition amounts to requiring that there exists a constant $c$ such that
\begin{equation}\label{delta2}
A(2t) \leq c A(t) \qquad \text{for $t \geq 0$.}
\end{equation}
\par
Our results provide a full answer to the relevant problem. We prove that, under the $\Delta_2$-condition on $A$, the limit in \eqref{feb2} does exist, and equals the integral of a function of $|u|$ over $\rn$. Moreover,  we show that the result can fail if the
$\Delta_2$-condition
is dropped.
Interestingly, the function of $|u|$ appearing in the   integral obtained in the limit is not $A$, but rather the Young function $\overline A$
associated with $A$ by the formula
\begin{equation}\label{Abar}
\overline A(t) = \int _0^t\frac {A(\tau)}\tau \, d\tau \qquad \text{for $t\geq0$\,.}
\end{equation}
Notice that  $A$ and $\overline A$ are equivalent as Young functions, since
$A(t/2) \leq \overline A(t) \leq  A(t)$ for $t \geq 0$,
owing to the monotonicity of $A(t)$ and  $A(t)/t$.

\begin{theorem}\label{T:lim0}
Let $n \in \mathbb N$ and let $A$ be a Young function satisfying the $\Delta_2$-condition. Assume that $u \in \bigcup_{s\in (0,1)} V^{s,A}_d(\rn)$. Then
\begin{equation}\label{jan1}
\lim_{s\to 0^+}  s\int_{\R^n} \int_{\rn} A\left(\frac{|u(x)-u(y)|}{|x-y|^s}\right)\frac{dx\,dy}{|x-y|^{n}} = \frac{2\, \omega_n}n \int_{\rn} \overline A(|u(x)|)\; dx\,.
\end{equation}
\end{theorem}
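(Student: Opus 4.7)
My plan is to split $\rn\times\rn$ according to the size of $|x-y|$, show that the contribution from a bounded ``near-field'' region is $O(1)$ as $s\to 0^+$ (and hence killed by the factor $s$), and extract the limit from the complementary ``far-field'' region via a change of variables that produces the Young function $\overline A$. To make this rigorous for a general $u\in\bigcup_{s\in(0,1)}V^{s,A}_d(\rn)$, I would first pass to bounded, compactly supported approximants $u_k(x)=\operatorname{sgn}(u(x))\min(|u(x)|,k)\chi_{B_k}(x)$, which converge monotonically to $u$ in modulus. The right-hand side of \eqref{jan1} for $u_k$ converges to that for $u$ by monotone convergence. On the left-hand side, the $\Delta_2$-condition, in the form $A(t_1+t_2)\leq C[A(t_1)+A(t_2)]$, is used to estimate the modular integral of $u-u_k$ uniformly in $s\in(0,s_0)$ (where $s_0$ is such that $u\in V^{s_0,A}_d$), enabling an $\varepsilon/3$ argument that reduces the theorem to bounded, compactly supported $u$.

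For such $u$, with $\|u\|_\infty\leq M$ and $\operatorname{supp} u\subset B_R$, fix $R_0>2R$ and partition $\rn\times\rn$ into $\mathcal A=\{|x-y|\leq 1\}$, $\mathcal B=\{1<|x-y|\leq R_0\}$, and $\mathcal C=\{|x-y|>R_0\}$. On $\mathcal A$, the map $s\mapsto|x-y|^{-s}$ is non-increasing in $s$ when $|x-y|\leq 1$, so the integrand at any $s\in(0,s_0)$ is dominated by the integrand at $s_0$, whose integral is finite by hypothesis. On $\mathcal B$, $|u(x)-u(y)|/|x-y|^s\leq 2M$ and the support condition confines the integration to a bounded subset of $\rn\times\rn$. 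Both contributions are therefore $O(1)$ as $s\to 0^+$, and the prefactor $s$ kills them.

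Because $R_0>2R$, on $\mathcal C$ at most one of $u(x),u(y)$ is non-zero, so by the $x\leftrightarrow y$ symmetry the integral over $\mathcal C$ equals
\[
2\int_{B_R}\left(s\int_{|x-y|>R_0}A\!\left(\frac{|u(x)|}{|x-y|^s}\right)\frac{dy}{|x-y|^n}\right)dx.
\]
Polar coordinates around $x$ turn the inner integral into $|S^{n-1}|\,s\int_{R_0}^\infty A(|u(x)|/r^s)\,dr/r$. The change of variable $\tau=|u(x)|/r^s$, giving $dr/r=-d\tau/(s\tau)$, is the decisive step: it cancels the factor $s$ and converts the radial integral into the defining integral of $\overline A$,
\[
s\int_{R_0}^\infty A\!\left(\frac{|u(x)|}{r^s}\right)\frac{dr}{r}=\int_0^{|u(x)|/R_0^s}\frac{A(\tau)}{\tau}\,d\tau=\overline A\!\left(\frac{|u(x)|}{R_0^s}\right).
\]
This converges pointwise to $\overline A(|u(x)|)$ as $s\to 0^+$ (by continuity of $\overline A$ and $R_0^s\to 1$) and is uniformly bounded by $\overline A(M)$ on $B_R$; dominated convergence then produces the integral of $\overline A(|u|)$ with the constant stated in \eqref{jan1}.

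The main obstacle is the reduction to bounded, compactly supported $u$: one must show that truncating $u$ perturbs the double integral by an amount that is uniformly small in $s\in(0,s_0)$. This is precisely where the $\Delta_2$-condition is essential, since it is what permits the additive splitting $A(t_1+t_2)\leq C[A(t_1)+A(t_2)]$; without it, the truncation error cannot be controlled, which is the structural reason why the conclusion of Theorem~\ref{T:lim0} may fail in the absence of $\Delta_2$.
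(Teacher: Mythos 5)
The heart of your far-field computation --- the substitution $\tau=|u(x)|/r^s$ turning $s\int_{R_0}^\infty A\bigl(|u(x)|/r^s\bigr)\,dr/r$ into $\overline A\bigl(|u(x)|/R_0^s\bigr)$ --- is exactly the device the paper uses (its equation \eqref{feb40}), and for bounded, compactly supported $u$ your version is in fact tighter: on $\{|x-y|>R_0>2R\}$ at most one of $u(x),u(y)$ is nonzero, so you reach $\overline A$ with no $(1+\varepsilon)$ loss from convexity and without the fractional Orlicz Hardy inequality \eqref{jan5} that the paper imports from \cite{ACPS_frac}. The decomposition is otherwise different: you slice $\rn\times\rn$ by the absolute size of $|x-y|$, whereas the paper compares $|x-y|$ to $|x|$ or to $|y|$; the latter is what allows the paper to treat a general $u$ in one pass, via separate $\liminf$ and $\limsup$ bounds, with no truncation.

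The reduction to bounded, compactly supported $u$ is, however, a genuine gap, and it does not close with the ingredients you list. For an $\varepsilon/3$ argument you need the \emph{uniform-in-$s$} smallness
\begin{equation*}
\sup_{s\in(0,s_0)}\; s\int_{\rn}\int_{\rn}A\Bigl(C\,\frac{|v_k(x)-v_k(y)|}{|x-y|^s}\Bigr)\frac{dx\,dy}{|x-y|^n}\;\longrightarrow\;0 \qquad\text{as $k\to\infty$,}
\end{equation*}
where $v_k=u-u_k$, and this is not a consequence of $\Delta_2$. The inequality $A(t_1+t_2)\le C[A(t_1)+A(t_2)]$ necessarily has $C\ge1$, so it inflates constants rather than isolating a small remainder; the form you would actually use, $A(t_1+t_2)\le(1+\delta)A(t_1)+C_\delta A(t_2)$, is plain convexity, and it still leaves the $v_k$-modular to be estimated. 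Moreover $v_k$ is neither bounded nor compactly supported, so your own far-field mechanism does not apply to it, and the near-field domination on $\{|x-y|\le1\}$ tacitly assumes $u_k\in V^{s_0,A}(\rn)$, which is not automatic: the sharp cutoff $\chi_{B_k}$ introduces a jump across $\partial B_k$, and the finiteness of the resulting modular only holds for small enough $s$ and needs its own argument. The paper avoids all this by working with general $u$ directly: the a priori integrability $\int_{\rn}A\bigl(|u(x)|/(\lambda|x|^s)\bigr)\,dx<\infty$, which plays the role your truncation was meant to play, is supplied by the Hardy inequality \eqref{jan5}; and $\Delta_2$ enters there not through additive splitting but through the finiteness of the Matuszewska--Orlicz index $I(A)$ (yielding $A(\lambda t)\le C\lambda^{I(A)+1}A(t)$) and through the equivalence of the auxiliary Young function in \eqref{jan5} with $A$. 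To complete your route you would need either to import such a Hardy inequality to control the truncation tail, or to drop the truncation and run your slicing directly on general $u$ with convexity splittings, which is essentially what the paper does.
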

Plainly, equation \eqref{jan1} recovers \eqref{feb1} when $A(t)=t^p$ for some $p \geq 1$, inasmuch as $\overline A(t)= t^p/p$ in this case.

\smallskip
\par
The indispensability  of the $\Delta_2$-condition for the function $A$ is demonstrated via the next result.
\begin{theorem}\label{counterex}
Let $n \in \mathbb N$. There exist   Young functions $A$, which do not satisfy the $\Delta_2$-condition, and corresponding  functions $u:\rn \to\R$ such that $u \in \ V^{s,A}_d(\rn)$  for every $s\in(0,1)$,
\begin{equation}\label{A2}
\int_{\rn} \overline A(|u(x)|)\;dx \leq \int_{\rn} A(|u(x)|)\;dx <\infty\,,
\end{equation}
but
\begin{equation}\label{A1}
\lim_{s\to 0^+}\,s\int_{\rn} \int_{\rn} A \left( \frac{|u(x) - u(y)|}{|x-y|^s}\right)\; \frac{dx\, dy}{|x-y|^n} =\infty\,.
\end{equation}
\end{theorem}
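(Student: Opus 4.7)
The plan is to exploit the fact that, unlike under $\Delta_2$, the quantity $A(|s|+|t|)$ can vastly exceed $A(|s|)+A(|t|)$ at scales where $A(2t)/A(t)$ is large. If $u$ takes opposite signs on nearby regions, $|u(x)-u(y)|$ attains values close to $|u(x)|+|u(y)|$ on substantial interaction regions, and the factor $|x-y|^{-s}$ further amplifies the integrand in~\eqref{A1}. I would construct $u$ so that this amplification overwhelms the factor $s$ in front of the integral even though $\int_{\rn} A(|u|)\,dx$ stays finite.

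Concretely, I would take a Young function $A$ failing $\Delta_2$---a natural first guess being $A(t)=e^t-t-1$---together with a sequence $t_k\uparrow\infty$ along which $A(2t_k)/A(t_k)\to\infty$, and define $u$ as a sum of smoothed dipoles
\[
u(x)=\sum_{k=1}^{\infty} t_k\bigl(\varphi_k^+(x)-\varphi_k^-(x)\bigr),
\]
where $\varphi_k^\pm$ are smooth approximations of indicators of disjoint small balls $B_k^\pm\subset\rn$ placed close together, and the dipoles at different scales $k$ are separated widely enough that their cross-interactions are negligible. The volumes $|B_k|$ would be chosen so that $\sum_k A(t_k)|B_k|<\infty$, yielding $\int_{\rn} A(|u|)\,dx<\infty$ and hence also~\eqref{A2} in view of $\overline A\le A$. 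To prove $u\in V^{s,A}_d(\rn)$ for each $s\in(0,1)$, I would decompose the double integral in~\eqref{A1} into single-bump self-contributions, intra-dipole pair contributions and inter-dipole cross-terms, handling near-diagonal parts via smoothness of $\varphi_k^\pm$ and the remaining parts via the summability of $\sum_k A(t_k)|B_k|$. To establish~\eqref{A1}, I would use a substitution $\tau=2t_k/|x-y|^s$ analogous to the one underlying~\eqref{feb1}: on the interaction region $B_k^+\times B_k^-$ one has $|u(x)-u(y)|\approx 2t_k$, and the intra-dipole pair contribution approaches in the limit a quantity proportional to $\overline A(2t_k)|B_k|$; the failure of $\Delta_2$ then permits choosing $t_k,|B_k|$ so that $\sum_k \overline A(2t_k)|B_k|=\infty$, forcing the limit in~\eqref{A1} to be $\infty$.

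The hardest part will be the delicate parameter balance: both finiteness of the double integral for each $s\in(0,1)$ and its divergence after multiplication by $s$ in the limit must hold simultaneously. Because $A$ fails $\Delta_2$, the factor $|x-y|^{-s}$ can amplify $A$ dramatically even for $|x-y|$ only slightly below~$1$, so the seminorm itself threatens to be infinite if the dipole radii shrink too fast. I expect the construction to succeed only for a carefully tailored $A$---quite possibly a piecewise-linear Young function with slope jumps placed exactly at the scales dictated by $t_k$---so that the $\Delta_2$ failure is concentrated precisely at the interaction scales seen by $u$ and not elsewhere.
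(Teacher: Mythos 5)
The central claim of your construction — that the intra-dipole contribution over $B_k^+\times B_k^-$ approaches a quantity proportional to $\overline{A}(2t_k)\,|B_k|$ in the limit $s\to0^+$ — is incorrect, and without it the proposed $u$ does not yield divergence. The $\frac1s$ factor that cancels the prefactor $s$ in the Maz'ya--Shaposhnikova computation comes from the identity
\[
\int_{t}^{\infty}A\!\left(\frac{\varrho}{r^{s}}\right)\frac{dr}{r}=\frac1s\,\overline A\!\left(\frac{\varrho}{t^{s}}\right),
\]
which requires $r=|x-y|$ to range over an infinite interval. On $B_k^+\times B_k^-$ the quantity $|x-y|$ is confined to a bounded interval, so $A\big(2t_k/|x-y|^{s}\big)\to A(2t_k)$ as $s\to0^+$ and the whole intra-dipole piece, after multiplication by $s$, tends to zero. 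The same holds for every inter-dipole cross term $B_j^{\pm}\times B_k^{\pm}$. The only contributions that survive the limit are from pairs with one point in a dipole and the other tending to infinity, where $|u(x)-u(y)|=|u(x)|=t_k$ (not $2t_k$); these yield $\propto \overline A(t_k)\,|B_k|$, and $\sum_k \overline A(t_k)\,|B_k|\le\sum_k A(t_k)\,|B_k|<\infty$ by your own normalization. Thus the limit would be finite, and no amount of tuning the slope jumps of $A$ can change this, since the geometry of the construction forces each interaction region to be bounded.

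The paper avoids this obstruction by making the ``dipole'' global rather than compactly supported. It takes $A(t)=e^{-1/t^{\gamma}}$ near $0$ (so $\Delta_2$ fails at $0$, not at $\infty$) and $v(x)=\dfrac{x_1}{|x|\log^{1/\gamma}(\kappa+|x|)}$ for $|x|\ge1$: the sign of $v$ is determined by the sign of $x_1$ and its modulus decays slowly. The interaction region $\{x_1>\sigma|x|\}\times\{y_1<-\sigma|y|\}$ is then a product of two unbounded cones; there one has $|v(x)-v(y)|\gtrsim 2\sigma/\log^{1/\gamma}(\kappa+|x|+|y|)$, and after switching to $\xi=|x|+|y|$ the relevant integral becomes $\int_2^\infty (\kappa+\xi)^{-(\lambda/2\sigma)^{\gamma}\xi^{\gamma s}}\,d\xi$. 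Because $\sigma$ is chosen so that $(\lambda/2\sigma)^{\gamma}<1$ (here is where failure of $\Delta_2$ at $0$ is exploited, through the gap between $A(|u|)$ and $A(2\sigma|u|/\lambda)$), this integral is finite for each $s>0$ but diverges faster than $1/s$ as $s\to0^+$, precisely because the interaction region is unbounded. Your instinct that a sign-changing ``dipole'' structure and $\Delta_2$-failure should drive the counterexample was correct in spirit; the missing ingredient is that the two opposite-sign regions must extend to infinity (and $\Delta_2$ must fail at the scales actually seen there, i.e., at $0$ for a decaying $u$), otherwise the $s$ in front annihilates the gain.
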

Incidentally, let us mention that an analogue of the Bourgain-Brezis-Mironescu theorem on the limit as $s\to 1^-$ for   fractional Orlicz-Sobolev spaces built upon Young functions satisfying the $\Delta_2$-condition can be found in \cite{BonderSalort}. Such a condition is removed in a version of this result offered in \cite{ACPS_limit1}. Further properties and applications of fractional Orlicz-Sobolev spaces are the subject of   \cite{ACPS_frac, BO, NBS, Sa}.

\section{Proof of Theorem \ref{T:lim0}}

Our approach to Theorem \ref{T:lim0} is related to that of \cite{MS}, yet calls into play specific Orlicz space results and techniques. In particular,
it makes critical use of a Hardy type inequality for functions in $V^{s,A}_d(\rn)$, with $s\in (0,1)$, recently established in \cite[Theorem 5.1]{ACPS_frac}. This inequality  tells us what follows.
\\ Given a Young function $A$, denote by $a: [0, \infty) \to [0, \infty)$   the  left-continuous non-decreasing function such that
\begin{equation*}
    A(t)=\int_0^{t}a(\tau) d\tau \qquad\text{for $t\geq 0$\,.}
\end{equation*}
Assume that
\begin{equation}\label{E:0'}
	\int^{\infty}\left(\frac{t}{A(t)}\right)^{\frac{s}{n-s}}\;dt = \infty
\end{equation}
and
\begin{equation}\label{E:0''}
	\int_{0}\left(\frac{t}{A(t)}\right)^{\frac{s}{n-s}}\;dt < \infty\,.
\end{equation}
Call $\Ass$ the Young function defined by
\begin{equation*}
    \Ass(t)=\int_0^{t}\ass(\tau) d\tau \qquad\text{for $t\geq 0$,}
\end{equation*}
where the (generalized) left-continuous inverse of the function $\ass$ obeys
\begin{equation}\label{E:2}
	\ass^{\,-1}(r) = \left(\int_{a^{-1}(r)}^{\infty}
		\left(\int_0^t\left(\frac{1}{a(\varrho)}\right)^{\frac{s}{n-s}}\,d\varrho\right)^{-\frac{n}{s}}\frac{dt}{a(t)^{\frac{n}{n-s}}}
				\right)^{\frac{s}{s-n}}
					\qquad\text{for $r\ge0$}\,.
\end{equation}
Then, there exists a constant $C=C(n, s)$ such that
\begin{equation}\label{jan5}
	\int_{\R^n}{\Ass}\left(\frac{|u(x)|}{|x|^s}\right)\;dx
		\le  (1-s) \int_{\R^n} \int_{\R^n}
			A\left(C\frac{|u(x)-u(y)|}{|x-y|^s}\right)\,\frac{dx\, dy}{|x-y|^n}
\end{equation}
for every   function $u \in  V_d^{s,A} (\rn)$. Moreover, the constant $C$ is uniformly bounded in $s$ if $s$ is bounded away from $1$.

\begin{proof}[Proof of Theorem \ref{T:lim0}]
Inasmuch as $A$ satisfies the  $\Delta _2$-condition,    its upper Matuszewska-Orlicz  index  $I(A)$, defined as
\begin{equation}\label{index}I(A) = \lim_{\lambda \to \infty} \frac{\log \Big(\sup _{t>0}\frac{A(\lambda t)}{A(t)}\Big)}{\log \lambda}\,,
\end{equation}
is finite.
A standard (and easily verified) consequence of this fact is that
there exists a positive constant $C=C(A)$ such that
\begin{equation}\label{E:31}
	A(\lambda t)\le C \lambda^{I(A)+1}A(t)\qquad\hbox{for $t\ge0$ and $\lambda\ge1$.}
\end{equation}
%
%
Thereby, there exists $s_0\in (0, 1)$ such that conditions  \eqref{E:0'} and \eqref{E:0''} are  fulfilled if $s\in (0, s_0)$. Hence, inequality \eqref{jan5} holds for $s\in (0, s_0)$.
\\ On the other hand,
$I(A) < \frac ns$ provided that $s<\frac{n}{I(A)}$. Hence,   \cite[Propositions 5.1 and 5.2]{cianchi_Ibero} ensure that the function $\Ass$ is equivalent to $A$ if $s<\frac{n}{I(A)}$. Namely, there exist constants $c_2>c_1>0$ such that $A(c_1t) \leq \Ass (t) \leq A(c_2 t)$ for $t \geq 0$.
Set $s_1 = \min \{s_0, \frac{n}{I(A)}\}$.  As a consequence of
 inequality \eqref{jan5}, of the equivalence of $A$ and $B$, of the $\Delta_2$-condition for $A$, and  of the inequality $\overline{A}\leq A$, if
  $u \in  V_d^{s,A} (\rn)$ for some $s \in (0,  s_1)$, then
\begin{equation}\label{jan6}
\int_{\R^n}{\overline A}\left(\frac{ |u(x)|}{\lambda|x|^s}\right)\;dx \leq \int_{\R^n}{A}\left(\frac{ |u(x)|}{\lambda|x|^s}\right)\;dx  < \infty
\end{equation}
for every $\lambda >0$.\\
We begin by establishing a lower bound for the $\liminf _{s\to 0^+}$ of the expression on the left-hand side of equation \eqref{jan1}. One has that
\begin{align}\label{E:2'}
		\int_{\R^n}
	\int_{\{|x-y|>2|x|\}} A\left(\frac{|u(x)|}{|x-y|^s}\right)\frac{dx\,dy}{|x-y|^{n}}
		& =  \frac{\omega_n}{n}\int_{\rn} \int_{2|x|}^{\infty}A\left(\frac{
	|u(x)|}{r^{s}}\right)\frac{dr}{r}\;dx
		= \frac{\omega_n}{ns} \int_{\rn} \overline{A}\left(\frac{
	|u(x)|}{2^s|x|^{s}}\right)\;dx\,.
\end{align}
Fix $\varepsilon>0$. Owing to the convexity of $A$,
\begin{align}\label{E:3}
		\int_{\R^n}\int_{\{|x-y|>2|x|\}}
		& A\left(\frac{|u(x)|}{|x-y|^s}\right)\frac{dx\,dy}{|x-y|^{n}}
		 \le \frac{1}{1+\varepsilon}\int_{\rn} \int_{\{|x-y|>2|x|\}}A\left((1+\varepsilon)\frac{
				|u(x)-u(y)|}{|x-y|^s}\right)\frac{dx\,dy}{|x-y|^{n}}
			\\ \nonumber
		& \quad + \frac{\varepsilon}{1+\varepsilon} \int_{\rn} \int_{\{|x-y|>2|x|\}}A\left(\frac{1+\varepsilon}{\varepsilon}\frac{
				|u(y)|}{|x-y|^s}\right)\frac{dx\,dy}{|x-y|^{n}} = I_1 + I_2.
\end{align}
Consider the integral $I_2$. If $|x-y| > 2|x|$, then $\frac{2}{3}|y|\le|x-y|\le2|y|$. Therefore,
\begin{align}\label{E:4}
		I_2 & \le \frac{\varepsilon}{1+\varepsilon} \int_{\rn}\int_{\{|x-y|>2|x|\}}A\left(\frac{1+\varepsilon}{\varepsilon}
				\left(\frac{3}{2}\right)^{s}\frac{|u(y)|}{|y|^{s}}\right)\left(\frac{3}{2}\right)^{n}\frac{dy}{|y|^{n}}\;dx
				\\ \nonumber
			& = \frac{\varepsilon}{1+\varepsilon}\left(\frac{3}{2}\right)^{n} \int_{\rn}\frac{1}{|y|^{n}}
				A\left(\frac{1+\varepsilon}{\varepsilon}\left(\frac{3}{2}\right)^{s}\frac{|u(y)|}{|y|^{s}}\right)
				\left(\int_{|x-y|\ge2|x|}\;dx\right)\;dy
				\\
				& \le \frac{\varepsilon \omega_n}{1+\varepsilon }\int_{\rn}A\left(\frac{1+\varepsilon}{\varepsilon}
				\left(\frac{3}{2}\right)^{s}\frac{|u(y)|}{|y|^{s}}\right)\;dy\,. \nonumber
\end{align}
Note that the last inequality holds since the set $\{|x-y|>2|x|\}$ is agrees with
the ball centered at $-\frac{1}{3}y$, with  radius   $\frac{2}{3}|y|$.
\\
In order to estimate the integral $I_1$,
observe that
$$\int_{\{|x-y|>2|x|\}}A\left((1+\varepsilon)\frac{
				|u(x)-u(y)|}{|x-y|^s}\right)\frac{dx\,dy}{|x-y|^{n}} = \int_{\{|x-y|>2|y|\}}A\left((1+\varepsilon)\frac{|u(x)-u(y)|}{|x-y|^s}\right)\frac{dx\,dy}{|x-y|^{n}}\,.$$
Furthermore, $\{|x-y|>2|x|\} \cap \{|x-y|>2|y| \}= \emptyset$, since   if there existed $x,y$ such that $|x-y|>2|x|$ and $|x-y|>2|y|$, then
$|x-y| \le |x|+|y| < \frac{|x-y|}{2}+\frac{|x-y|}{2}=|x-y|$, a contradiction. Thus,
\begin{align}\label{E:I-1}
		I_1
 \le \frac{1}{2(1+\varepsilon)}\int_{\rn} \int_{\rn}A\left((1+\varepsilon)\frac{
				|u(x)-u(y)|}{|x-y|^s}\right)\frac{dx\,dy}{|x-y|^{n}}.
\end{align}
Consequently,
\begin{align}\label{E:page-10}
		\frac{s}{1+\varepsilon} & \int_{\rn} \int_{\rn}A\left((1+\varepsilon)\frac{
		|u(x)-u(y)|}{|x-y|^s}\right)\frac{dx\,dy}{|x-y|^{n}}
		\ge 2sI_1
		 \ge  \frac{2\,\omega_n}{n} \int_{\rn}\overline{A}\left(\frac{|u(x)|}{2^s|x|^s}\right)\;dx - 2sI_2
			\\ \nonumber
		& \ge \frac{2\,\omega_n}{n} \int_{\rn}\overline{A}\left(\frac{|u(x)|}{2^s|x|^s}\right)\;dx
		 - \frac{2s\varepsilon \omega_n}{1+\varepsilon }\int_{\rn} A\left(\frac{1+\varepsilon}{\varepsilon}	\left(\frac{3}{2}\right)^{s}\frac{|u(y)|}{|y|^s}\right)\;dy\,,
\end{align}
where the first inequality follows from \eqref{E:I-1}, the second one is due to \eqref{E:3} and \eqref{E:2'}, and the third one to \eqref{E:4}. Since $A(t)\le \overline{A}(2t)$  for $t\ge0$,
 inequality \eqref{E:31} implies that
\begin{equation}\label{E:33}
	A(\lambda t)\le C\lambda^{I(A)+1}\,\overline{A}(2t)\qquad\text{if $t\ge0$ and $\lambda\ge1$\,.}
\end{equation}
From inequalities \eqref{E:page-10} and \eqref{E:33} one deduces that
\begin{align}\label{jan7}
    & \frac{s}{1+\varepsilon}\int_{\R^n}\int_{\R^n}A\left((1+\varepsilon)\frac{|u(x)-u(y)|}{|x-y|^s}\right)\frac{dx\,dy}{|x-y|^{n}}
        	\\ \nonumber
    & \ge \frac{2\, \omega_n}{n}\int_{\R^n}\overline{A}\left(\frac{|u(x)|}{2^s|x|^{s}}\right)\;dx
		- 	\frac{2Cs\varepsilon \omega_n}{1+\varepsilon}	\left(\frac{2(1+\varepsilon)}{\varepsilon}3^s\right)^{I(A)+1}
	\int_{\R^n}\overline A\left(\frac{|u(y)|}{2^s|y|^s}\right)\;dy
 			\\ \nonumber
    & =  \frac{2 \,\omega_n}{n} \left[1 -    
    \frac{Cns \varepsilon }{1+\varepsilon}
     \left(\frac{2(1+\varepsilon)}{\varepsilon}3^s\right)^{I(A)+1}
		\right]\int_{\R^n}\overline{A}\left(\frac{|u(x)|}{2^s|x|^{s}}\right)\;dx\,.
\end{align}
Thus, there exists $s _2 = s_2(A, n, \varepsilon) \in (0, s _1)$ such that, if $s \in (0, s_2)$, then
\begin{equation}\label{jan8}
\frac{s}{1+\varepsilon}\int_{\R^n}\int_{\R^n}A\left((1+\varepsilon)\frac{|u(x)-u(y)|}{|x-y|^s}\right)\frac{dx\,dy}{|x-y|^{n}}
 \geq  \frac{2\, \omega_n}{n}(1-\varepsilon)\int_{\R^n}\overline{A}\left(\frac{|u(x)|}{2^s|x|^{s}}\right)\;dx\,.
\end{equation}
On replacing $u$ by $u/(1+\varepsilon)$ in inequality \eqref{jan8}, one can infer, via Fatou's lemma, that
\begin{equation}\label{jan9}
	\liminf_{s\to0^+} s\int_{\R^n}
    		\int_{\rn} A\left(\frac{|u(x)-u(y)|}{|x-y|^s}\right)\frac{dx\,dy}{|x-y|^{n}}
    			\geq
    			\frac{2\, \omega_n}{n}(1-\varepsilon^{2})\int_{\R^n}\overline{A}\left(\frac{|u(x)|}{1+\varepsilon}\right)\;dx\,.
\end{equation}
By the arbitrariness of $\varepsilon$,
\begin{align}\label{E:6}
		\liminf_{s\to0^+} s & \int_{\rn} \int_{\rn}A\left(\frac{
		|u(x)-u(y)|}{|x-y|^s}\right)\frac{dx\,dy}{|x-y|^{n}}
		\ge \frac{2\, \omega_n}{n}\int_{\rn}\overline{A}\left(|u(x)|\right)\; dx\,.
\end{align}
In particular, inequality \eqref{E:6} implies that, if the integral on the right-hand side diverges, then equation \eqref{jan1} certainly holds. Thus, in what follows, we may  assume  that
\begin{equation}\label{jan100}
\int_{\rn}\overline{A}\left(|u(x)|\right)\; dx < \infty\, .
\end{equation}
Next, we provide an upper   bound for the $\limsup _{s\to 0^+}$ of the expression on the left-hand side of equation \eqref{jan1}. One has that
\begin{align}\label{E:8}
		& s  \int_{\R^n}  \int_{\rn} A\left(\frac{|u(x)-u(y)|}{|x-y|^s}\right)\frac{dx\,dy}{|x-y|^{n}}
				\\ \nonumber
		& =	s\int_{\R^n} \int_{\{|y|\ge|x|\}} A\left(\frac{|u(x)-u(y)|}{|x-y|^s}\right)\frac{dx\,dy}{|x-y|^{n}}
		 + s\int_{\R^n} \int_{\{|y|<|x|\}} A\left(\frac{|u(x)-u(y)|}{|x-y|^s}\right)\frac{dx\,dy}{|x-y|^{n}}
				\\ \nonumber
		& =	2s\int_{\R^n} \int_{\{|y|\ge|x|\}} A\left(\frac{|u(x)-u(y)|}{|x-y|^s}\right)\frac{dx\,dy}{|x-y|^{n}}
				\\ \nonumber
		& =	2s  \int_{\R^n} \int_{\{|y|\geq 2|x|\}} A\left(\frac{|u(x)-u(y)|}{|x-y|^s}\right)\frac{dx\,dy}{|x-y|^{n}} + 2s \int_{\R^n} \int_{\{|x|\le|y|< 2|x|\}} A\left(\frac{|u(x)-u(y)|}{|x-y|^s}\right)\frac{dx\,dy}{|x-y|^{n}} 
				\\ \nonumber
		& \le  \frac{2s}{1+\varepsilon}\int_{\R^n} \int_{\{|y|\ge2|x|\}} A\left((1+\varepsilon)\frac{|u(x)|}{|x-y|^s}\right)\frac{dx\,dy}{|x-y|^{n}}
				 +  \frac{2s\, \varepsilon}{1+\varepsilon}\int_{\R^n} \int_{\{|y|\ge2|x|\}} A\left(\frac{1+\varepsilon}{\varepsilon}
			\frac{|u(y)|}{|x-y|^s}\right)\frac{dx\,dy}{|x-y|^{n}}
				\\ \nonumber
		&	\quad + 2s\int_{\R^n} \int_{\{|x|\le|y|<2|x|\}} A\left(\frac{|u(x)-u(y)|}{|x-y|^s}\right)\frac{dx\,dy}{|x-y|^{n}}
				\\ \nonumber
		&  = J_1+J_2+J_3\,,
\end{align}
where the  inequality holds since $A$ is convex.
Let us  estimate  $J_1$ first.
To this purpose, notice that $\{|y|\ge2|x|\} \subset \{|x-y|\ge |x|\}$, since $|x-y| \ge |y|-|x| \ge 2|x|-|x| = |x|$.
Thus,
\begin{align}\label{E:page-2}
	\int_{\{|y|\ge2|x|\}} A\left((1+\varepsilon)\frac{|u(x)|}{|x-y|^s}\right)\frac{dy}{|x-y|^{n}}
		& \le \int_{\{|x-y|\ge |x|\}} A\left((1+\varepsilon)\frac{|u(x)|}{|x-y|^s}\right)\frac{dy}{|x-y|^{n}}
			\\ \nonumber
		& = \frac{\omega_n}{n}\int_{|x|}^{\infty}A\left((1+\varepsilon)\frac{|u(x)|}{r^s}\right)\frac{dr}{r}\,
\end{align}
for every  $x\in\R^{n}$.
A change of variables tells us that
\begin{equation}\label{feb40}
	\int_{t}^{\infty}A\left(\frac{(1+\varepsilon)\varrho}{r^s}\right)\frac{dr}{r}
		= \frac{1}{s}\int_{0}^{\frac{(1+\varepsilon)\varrho}{t^s}}\frac{A(\tau)}{\tau}\,d\tau
		= \frac{1}{s}\overline{A}\left(\frac{(1+\varepsilon)\varrho}{t^s}\right)\qquad \text{for $t, \varrho \ge0$\,.}
\end{equation}
Thanks to equations \eqref{E:page-2} and \eqref{feb40},
\begin{equation}\label{feb12}
	J_1 \le \frac{2\,\omega_n}{n(1+\varepsilon)}\int_{\R^{n}}\overline{A}\left((1+\varepsilon)\frac{|u(x)|}{|x|^s}\right)\;dx\,.
\end{equation}
As far as the term $J_2$ is concerned, observe that, if $|y|\ge2|x|$, then $|x-y|\ge \frac 12|y|$.
Therefore, an application of Fubini's theorem tells us that
%
%
\begin{align}\label{E:J-2}
		J_2 & 
			\le \frac{2^{n+1}s \varepsilon  }{1+\varepsilon}\int_{\R^n}A\left(\frac{1+\varepsilon}{\varepsilon} 2^{s}
	\frac{|u(y)|}{|y|^s}\right)\left(\int_{\{|x|\leq\frac{|y|}{2}\}}\;dx\right)\frac{dy}{|y|^n}
 = \frac{2\omega_n s \varepsilon }{1+\varepsilon}\int_{\R^n}A\left(\frac{1+\varepsilon}{\varepsilon} 2^{s}
			\frac{|u(y)|}{|y|^s}\right)\;dy.
\end{align}
 In order  to provide an upper bound for $J_3$, note that, given $r>3$,
\begin{align}\label{E:J-3}
		J_3 & 
			= 2s\int_{\R^n} \int_{\{|x|\le|y|<2|x|,|x-y|<r\}}A\left(
			\frac{|u(x)-u(y)|}{|x-y|^s}\right)\frac{dx\,dy}{|x-y|^{n}}
				\\ \nonumber
			& \quad + 2s\int_{\R^n} \int_{\{|x|\le|y|<2|x|,|x-y|\ge r\}}A\left(
			\frac{|u(x)-u(y)|}{|x-y|^s}\right)\frac{dx\,dy}{|x-y|^{n}}
				\\ \nonumber
			& = J_{31}+J_{32}.
\end{align}
Since we are assuming that $u\in\bigcup_{s\in(0,1)}V_d^{s,A}(\R^{n})$, there exists    $s_3\in(0,1)$
such that $u\in V_d^{s_3,A}(\R^{n})$.  Let $s \in (0,  s_3)$. Then
\begin{equation}\label{feb11}
	J_{31} \le 2s\int_{\R^n} \int_{\{|x|\le|y|<2|x|,|x-y|<r\}}A\left(
			\frac{|u(x)-u(y)|}{|x-y|^{ s_3}}r^{s_3-s}\right)\frac{dx\,dy}{|x-y|^{n}},
\end{equation}
since $s_3-s>0$ and $u\in V^{s_3,A}_d(\R^{n})$. If $|x|\le|y|<2|x|$ and $|x-y|\ge r$, then
\begin{equation*}
	3|x|=2|x|+|x|\ge|y|+|x|\ge|x-y|\ge r,
\end{equation*}
whence $|x|\ge \frac r3$, and $|x|\le|y|<|x|\le2|y|$.  Consequently,
\begin{equation*}
	3|y|=2|y|+|y|\ge2|x|+|y| \ge|x-y|\ge r,
\end{equation*}
and hence $|y|\ge\frac r3$ as well. Therefore, owing to the convexity of the function $A$,
\begin{align}\label{E:J-32}
		J_{32}&
		\le s\int_{\R^n} \int_{\{|x|\le|y|<2|x|,|x-y|\ge r\}}A\left(
			\frac{2|u(x)|}{|x-y|^s}\right)\frac{dx\,dy}{|x-y|^{n}}
				 + s\int_{\R^n} \int_{\{|x|\le|y|<2|x|,|x-y|\ge r\}}A\left(
			\frac{2|u(y)|}{|x-y|^s}\right)\frac{dx\,dy}{|x-y|^{n}}
				\\ \nonumber		
			& \le s\int_{\{|x|\geq\frac{r}{3}\}} \left(\int_{\{|x-y|\ge r\}}A\left(
			\frac{2|u(x)|}{|x-y|^s}\right)\frac{dy}{|x-y|^{n}}\right)\;dx
				 + s\int_{\{|y|\geq\frac{r}{3}\}} \left(\int_{\{|x-y|\ge r\}}A\left(
			\frac{2|u(y)|}{|x-y|^s}\right)\frac{dx}{|x-y|^{n}}\right)\;dy
				\\ \nonumber
			& = 2s\int_{\{|x|\geq\frac{r}{3}\}} \left(\int_{\{|x-y|\ge r\}}A\left(
			\frac{2|u(x)|}{|x-y|^s}\right)\frac{dy}{|x-y|^{n}}\right)\;dx
			= \frac{2s\omega_n}{n}\int_{\{|x|\geq\frac{r}{3}\}} \left(\int_{r}^{\infty}A\left(
			\frac{2|u(x)|}{\varrho ^s}\right)\frac{d \varrho}{\varrho}\right)\;dx
				\\ \nonumber	
			& = \frac{2\omega_n}{n}\int_{\{|x|\geq\frac{r}{3}\}} \overline{A}\left(
		\frac{2|u(x)|}{|x|^s}\right)\;dx.
\end{align}
Since we are assuming that $r>3$, the latter equation implies that
\begin{equation*}
	J_{32} \le \frac{2 \omega_n}{n}\int_{\{|x|\geq\frac{r}{3}\}} \overline{A}\left(
			2|u(x)|\right)\;dx \qquad\text{for every $s\in(0,1)$.}
\end{equation*}
Consequently, if $r$ is large enough,  then
\begin{equation}\label{feb10}
	J_{32}<\varepsilon
\end{equation}
for every $s\in(0,1)$.
Combining equations \eqref{E:8},  \eqref{feb12}-\eqref{feb11} and \eqref{feb10} implies that, for every $s\in (0,1)$,
\begin{align}\label{E:88}
	s\int_{\R^n} & \int_{\rn} A\left(\frac{|u(x)-u(y)|}{|x-y|^s}\right)\frac{dx\,dy}{|x-y|^{n}}
			\\ \nonumber
		& \le \frac{2\omega_n}{n(1+\varepsilon)}\int_{\R^{n}}\overline{A}\left((1+\varepsilon)\frac{|u(x)|}{|x|^s}\right)\;dx
		+\frac{2\omega_n s \varepsilon} {1+\varepsilon}\int_{\R^n}A\left(\frac{1+\varepsilon}{\varepsilon} 2^{s}
		\frac{|u(y)|}{|y|^s}\right)\;dy
			\\ \nonumber
		& \quad + 2s\int_{\R^n} \int_{\{|x|\le|y|<2|x|,|x-y|<r\}}A\left(
			\frac{|u(x)-u(y)|}{|x-y|^{s_3}}r^{s_3-s}\right)\frac{dx\,dy}{|x-y|^{n}}
			  + \varepsilon.	\end{align}
Passage to the limit as $s\to 0^+$ in inequality  \eqref{E:88} can be performed as follows.
If $|y|\le 2$, then the function $(0,1) \ni s\mapsto A\left(\frac{1+\varepsilon}{\varepsilon} 2^{s}\frac{|u(y)|}{|y|^s}\right)$ is non-decreasing.  Thus, 
\begin{equation*}
	A\left(\frac{1+\varepsilon}{\varepsilon} 2^{s}\frac{|u(y)|}{|y|^s}\right)
		\le A\left(\frac{1+\varepsilon}{\varepsilon} 2^{s_3}\frac{|u(y)|}{|y|^{s_3}}\right)
		\qquad\text{for every $s\in(0,{s_3})$,}
\end{equation*}
and, since we are assuming that $u\in V^{s_3,A}_d(\rn)$, we have that
$$\int_{\rn}
A\left(\frac{1+\varepsilon}{\varepsilon} 2^{s_3}\frac{|u(y)|}{|y|^{s_3}}\right)\; dy< \infty,
$$
owing to \eqref{jan6}.
Inasmuch as
\begin{equation*}
	\lim_{s\to0^+} A\left(\frac{1+\varepsilon}{\varepsilon} 2^{s}\frac{|u(y)|}{|y|^s}\right)
		= A\left(\frac{1+\varepsilon}{\varepsilon} |u(y)|\right) \qquad \text{for $y\neq 0$,}
\end{equation*}
%
the dominated convergence theorem ensures that
\begin{equation}\label{jan101}
	\lim_{s\to0^+} \int_{\{|y|\le 2\}}A\left(\frac{1+\varepsilon}{\varepsilon} 2^{s}\frac{|u(y)|}{|y|^s}\right)\;dy
		= \int_{\{|y|\le 2\}} A\left(\frac{1+\varepsilon}{\varepsilon} |u(y)|\right)<\infty.
\end{equation}
On the other hand, if $|y|> 2$, then the function $(0,1) \ni s\mapsto A\left(\frac{1+\varepsilon}{\varepsilon} 2^{s}\frac{|u(y)|}{|y|^s}\right)$ is non-increasing.
Consequently, the monotone convergence theorem yields
\begin{equation}\label{jan102}
	\lim_{s\to0^+} \int_{\{|y|> 2\}}A\left(\frac{1+\varepsilon}{\varepsilon} 2^{s}\frac{|u(y)|}{|y|^s}\right)\;dy
		= \int_{\{|y|> 2\}}A\left(\frac{1+\varepsilon}{\varepsilon}  |u(y)|\right)<\infty.
\end{equation}
Equations \eqref{jan101} and \eqref{jan102} imply that
\begin{equation}\label{E:second-term}
	\lim_{s\to0^+}\frac{2\omega_n s\varepsilon }{1+\varepsilon}\int_{\rn}A\left(\frac{1+\varepsilon}{\varepsilon} 2^{s}
		\frac{|u(y)|}{|y|^s}\right)\;dy =0.
\end{equation}
An argument analogous to that of the proofs of equations \eqref{jan101} and \eqref{jan102} yields
\begin{equation}\label{feb41}
    \lim_{s\to0^+} \int_{\R^{n}}\overline{A}\left((1+\varepsilon)\frac{|u(x)|}{|x|^s}\right)\;dx
    =
    \int_{\rn} \overline{A}\left(
		(1+\varepsilon)|u(x)|\right)\;dx.
\end{equation}
Next, for every $s\in(0,{s_3})$,
\begin{align*}
	\int_{\R^n} &\int_{\{|x|\le|y|<2|x|,|x-y|<r\}}A\left(\frac{|u(x)-u(y)|}{|x-y|^{s_3}}r^{s_3-s}\right)\frac{dx\,dy}{|x-y|^{n}}
		\\ &\le \int_{\R^n} \int_{\{|x|\le|y|<2|x|,|x-y|<r\}}A\left(\frac{|u(x)-u(y)|}{|x-y|^{s_3}}r^{s_3}\right)\frac{dx\,dy}{|x-y|^{n}}< \infty.
\end{align*}
Observe that the convergence of the last integral is due to the fact that $u\in V^{s_3,A}_d(\rn)$ and $A$ satisfies the $\Delta_2$-condition.
Therefore,
\begin{equation}\label{E:third-term}
	\lim_{s\to0^+}2s\int_{\R^n} \int_{\{|x|\le|y|<2|x|,|x-y|<r\}}A\left(
			\frac{|u(x)-u(y)|}{|x-y|^{s_3}}r^{{s_3}-s}\right)\frac{dx\,dy}{|x-y|^{n}} =0.
\end{equation}
Thanks to equations ~\eqref{E:88},~\eqref{E:second-term}, \eqref{feb41} and~\eqref{E:third-term},
%
%
\begin{equation*}
	\limsup_{s\to0^+}\int_{\R^n}
	\int_{\rn} A\left(\frac{|u(x)-u(y)|}{|x-y|^s}\right)\frac{dxdy}{|x-y|^{n}}
	\le \frac{2\omega_n}{n(1+\varepsilon)}\int_{\rn} \overline{A}\left(
			(1+\varepsilon)|u(x)|\right)\,dx + \varepsilon.
\end{equation*}
Hence, owing to the arbitrariness of $\varepsilon$,
\begin{equation}\label{jan10}
	\limsup_{s\to0^+}\int_{\R^n}
	\int_{\rn} A\left(\frac{|u(x)-u(y)|}{|x-y|^s}\right)\frac{dxdy}{|x-y|^{n}}
	\le \frac{2\omega_n}{n}\int_{\rn} \overline{A}\left(
			|u(x)|\right)\,dx.
\end{equation}
Coupling equations  \eqref{E:6} and \eqref{jan10} yields \eqref{jan1}.
\end{proof}

\section{Proof of Theorem \ref{counterex}}

Functions $A$ and $u$ as in the statement of Theorem \ref{counterex} are  explicitly exhibited in our proof.
\begin{proof}[Proof of Theorem \ref{counterex}]
Let $\gamma>1$ and let $A$ be any finite-valued Young function such that
\begin{equation*}
A(t)= e^{-\frac1{t^{\gamma}}} \quad \hbox{for $t \in (0, \tfrac 1{2e})$\,.}
\end{equation*}
Note that functions $A$ enjoying this property do exist,  since
    $\lim_{t\to0^+}e^{-\frac1{t^{\gamma}}}=0$
and the function $e^{-\frac1{t^{\gamma}}}$ is convex on the interval $\big(0,\big(\frac{\gamma}{\gamma+1}\big)^{\frac{1}{\gamma}}\big)$. The fact that $A$ is a Young function ensures that, for every $t_0 >0$,
\begin{equation}\label{feb20}
A(t) \leq t \tfrac{A(t_0)}{t_0} \quad \text{for $t \in [0, t_0]$.}
\end{equation}
Also, one can verify that, for each $s \in (0,1)$, there exists $\overline{t}=\overline{t}(s,n) \in (0, \tfrac 1{2e})$ such that the function
\begin{equation}\label{feb22}
    (0, \overline t) \ni t \mapsto   \frac{A(t^{1-s})}{t^{\gamma}}\quad \text{is  increasing.}
\end{equation}
Let $v\colon\R^n\to \R$ be the function defined as
\begin{equation}\label{A7}
    v(x) =
    \begin{cases}
     \displaystyle\frac {x_1}{|x| \log^{\frac{1}{\gamma}} {(\kappa+|x|)} } &\text{if $|x|\geq 1$}
                    \\
              \\ \displaystyle \frac {x_1}{\log^{\frac{1}{\gamma}} {(\kappa+1)} } &\text{if $|x| < 1$,}
    \end{cases}
\end{equation}
where $x=(x_1, \dots , x_n)$ and $\kappa>1$  is a sufficiently large constant  to be chosen later in such a way the argument of the function $A$, evaluated at several expressions depending on $v$, belongs to the interval $(0, \tfrac 1{2e})$ or $(0, \overline t)$.
\\ Notice that the function $v$ is Lipschitz continuous in $\rn$ and continuously differentiable in $\{|x|>1\}$,
and
\begin{equation}\label{A50}
|\nabla v (x)| \le \frac{\kappa}{|x|  \log^{\frac{1}{\gamma}}{(\kappa+ |x|)}} \qquad \hbox{if}\; |x|>1\,.
\end{equation}
Moreover, if $x,y \in \rn$ are such that $|(1-\tau)x+\tau y|>1$ for $\tau\in[0,1]$, then there exists $\tau_0 \in [0,1]$ satisfying
\begin{equation}\label{A51}
|v(x)-v(y)| \le \frac{3|x-y|}{|(1-\tau_0)x+\tau_0 y|\log^{\frac{1}{\gamma}}{(\kappa + |(1-\tau_0)x+\tau_0 y|)} }\,.
\end{equation}
Given $\lambda>1$, choose $\kappa$ so large that $\frac 1 {\lambda \log^{\frac{1}{\gamma}}{(\kappa+1)}}<\frac 1{2e}$. Therefore, there exists a constant $C$ such that 
\begin{align}\label{A52}
&\int_{\rn} A\left( \frac{|v(x)|}{\lambda}\right)\; dx 
\leq
C
+ \int_{|x|\ge1} \frac {dx}{ (\kappa+|x|)^{\lambda^{\gamma}}} <\infty\,.\nonumber
\end{align}
Now, we claim that
\begin{equation}\label{A53}
\int_{\R^n} \int_{\R^n} A\left(\frac{|v(x) - v(y)|}{\lambda |x-y|^s}\right) \; \frac{dx\,dy}{|x-y|^{n}} <  \infty
\end{equation}
for every $s\in (0,1)$ and $\lambda \geq 1$. Fix any $s\in (0,1)$.
To verify equation \eqref{A53}, observe that
\begin{align}\label{A54}
&\int_{\R^n} \int_{\R^n} A\left(\frac{|v(x) - v(y)|}{|x-y|^s}\right) \; \frac{dx\,dy}{|x-y|^{n}} = \int \int_{\{|x|\leq 1,|y|\leq 1\}}A\left(\frac{|v(x) - v(y)|}{|x-y|^s}\right) \; \frac{dx\,dy}{|x-y|^{n}}
\\
& \quad + 2
\int \int_{\{|x|<1,|y|> 1\}}A\left(\frac{|v(x) - v(y)|}{|x-y|^s}\right) \; \frac{dx\,dy}{|x-y|^{n}} + \int \int_{\{|x|> 1,|y|> 1\}}A\left(\frac{|v(x) - v(y)|}{|x-y|^s}\right) \; \frac{dx\,dy}{|x-y|^{n}}\nonumber
\\
& =J_1+ J_2 + J_3\,. \nonumber
\end{align}
Owing to the Lipschitz continuity of $v$ and to property \eqref{feb20}, if $E\subset \R^{n}\times\R^{n}$ is a bounded set, then there exist positive constants $C$ and $C'$  such that
\begin{align}\label{E:12}
&\int_{E} \int_{E} A\left(\frac{|v(x) - v(y)|}{|x-y|^s}\right) \; \frac{dx\,dy}{|x-y|^{n}} \le \int_{E} \int_{E}A\left(C|x-y|^{1-s}\right) \; \frac{dx\,dy}{|x-y|^{n}}
\le C'
\int_{E} \int_{E}\frac{dx\,dy}{|x-y|^{n-1+s}}<\infty.
\end{align}
Hence,
\begin{equation}\label{E:13}
J_1<\infty.
\end{equation}
Next, let us split  $J_2$ as
\begin{align}\label{A14}
J_2 &=\int\int_{\{|x|\leq 1,|y|>1, |x-y|>2\}}
A \left( \frac{|v(x) - v(y)|}{|x-y|^s}\right)\; \frac{dx\, dy}{|x-y|^n}
\\
& \quad + \int\int_{\{|x|\leq 1, |y|>1, |x-y|\leq 2\}}
A \left( \frac{|v(x) - v(y)|}{|x-y|^s}\right)\; \frac{dx\, dy}{|x-y|^n} = J_{21} + J_{22}\nonumber
\end{align}
Consider $J_{21}$ first. If $|x|\leq 1$ and $|x-y|>2$, then
$$|x|+|y| \leq |x| + |y-x| +|x| = 2|x| + |y-x| \leq 2 + |y-x| \leq |x-y|+|y-x| =2|x-y|\,.$$
Thus,
   $$ |x-y|\leq |x|+|y|\leq 2|x-y|\,.$$
Hence, there exist  positive constants $C,C',C''$ such that
\begin{align}\label{A17}
J_{21}&\leq \int\int_{\{|x|\leq 1,|y|>1, |x-y|>2\}}
A \left( C \frac{|v(x)|+ |v(y)|}{(|x| +|y|)^s}\right)\; \frac{dx\, dy}{(|x|+|y|)^{n}}
\\
&\leq \int\int_{\{|x|\leq 1,|y|>1\}}
A \left( 2 C \frac{|v(x)|}{ |y|^s}\right)\; \frac{dx\, dy}{|y|^{n}}+ \int\int_{\{|x|\leq 1,|y|>1\}}
A \left( 2 C \frac{|v(y)|}{ |y|^s}\right)\; \frac{dx\, dy}{|y|^{n}}\nonumber
\\
&\leq 2 \int\int_{\{|x|\leq 1,|y|>1\}} A\left( \frac {2C}{\log^{\frac{1}{\gamma}}( \kappa +1 )\,|y|^s}\right)\; \frac{dx\, dy}{|y|^{n}}
= C' \int_1^\infty A\left( \frac{2C}{\log^{\frac{1}{\gamma}}( \kappa +1 )\,r^s}  \right) \; \frac{dr}{r} \nonumber \\
&= C'\int_1^\infty e^{-C''r^{\gamma s}}  \; \frac{dr}{r}
<\infty, \nonumber
\end{align}
where
the last equality holds provided that the constant $\kappa$ is so large that $\frac{2C}{\log^{\frac{1}{\gamma}}( \kappa+1 )} < \frac 1{2e}$.
\\ As for  $J_{22}$, notice that, if $|x|\leq 1$ and $|x-y|\leq 2$, then
$|y|\leq |y-x| +|x| \leq 3$.
 Thus, by property~\eqref{E:12}, one has that
 \begin{align}\label{A19}
J_{22} < \infty.
\end{align}
Finally, let us focus on the term $J_3$, that can be split as
\begin{align}\label{A20}
J_3 &= \int\int_{\{|x|>1, |y|>1, |x-y|\geq \frac{ |x|+|y|}{2}\}}  A\left( \frac{|v(x) - v(y)|}{|x-y|^s} \right) \; \frac{dx\, dy}{|x-y|^n}
\\
& \quad
+ \int\int_{\{|x|>1, |y|>1, |x-y|< \frac{ |x|+|y|}{2}\}}  A\left( \frac{|v(x) - v(y)|}{|x-y|^s} \right) \; \frac{dx\, dy}{|x-y|^n}
= J_{31}+ J_{32}\,.\nonumber
\end{align}
Consider $J_{32}$. If
\begin{equation}
    \label{feb30}
    |x-y|< \frac{|x|+|y|}{2},
\end{equation}
 then
$|x|\leq |x-y| +|y| \leq \frac {|x|}2 + \frac{|y|}2 +|y|\,,$
whence $|x|\leq 3|y|$. Similarly, $|y|\leq 3|x|$. Thus,
$\frac {|y|}3 \leq |x| \leq 3|y|$,
and
\begin{equation}\label{A22}
|x|  \geq \frac{|x| +|y|}6, \quad |y|\ge\frac{|x| +|y|}6.
\end{equation}
Moreover, if $x$ and $y$ fulfill inequality \eqref{feb30},  then there exists an absolute constant $\beta>0$ such that
\begin{equation}\label{E:19}
    |(1-\tau)x+\tau y|\ge\beta(|x|+|y|)\quad \text{for $\tau\in[0,1]$.}
\end{equation}
Indeed, squaring  both sides of inequality \eqref{feb30}  shows that it is equivalent to
\begin{align*}
  8x\cdot y  > 
  2(|x|^2+|y|^2)+(|x|-|y|)^2.
\end{align*}
Hence, $x\cdot y  > \frac 14 (|x|^2+|y|^2)$ and, by inequality \eqref{A22}, there exists an absolute constant $C$ such that
\begin{align}\label{E:20}
   |(1-\tau)x+\tau y|^{2}
   & = (1-\tau)^{2}|x|^{2}+2\tau(1-\tau)x\cdot y+\tau^2|y|^2
        \\ \nonumber
   & \ge (1-\tau)^{2}|x|^{2}+\tau(1-\tau)\frac{|x|^2+|y|^2}{2}+\tau^2|y|^2
        \\ \nonumber
   & \ge C\min\left\{|x|^2,|y|^2\right\} \ge C \left(\frac{|x|+|y|}{6}\right)^{2}  \quad \text{for $\tau \in[0,1]$.}
\end{align}
Inequality~\eqref{E:19} is thus established.
Let us split $J_{32}$ as
\begin{align}\label{E:22}
  J_{32} & = \int\int_{\{|x|>1, |y|>1, |x-y|< \frac{ |x|+|y|}{2}, \sqrt{|x|^2+|y|^2}<\frac{1}{\beta}\}} A\left( \frac{|v(x) - v(y)|}{|x-y|^s} \right) \; \frac{dx\, dy}{|x-y|^n}
        \\ \nonumber
    & \quad + \int\int_{\{|x|>1, |y|>1, |x-y|< \frac{ |x|+|y|}{2}, \sqrt{|x|^2+|y|^2}\ge\frac{1}{\beta}\}}A\left( \frac{|v(x) - v(y)|}{|x-y|^s} \right) \; \frac{dx\, dy}{|x-y|^n}
        \\ \nonumber &  = J_{321}+J_{322}.
\end{align}
By property~\eqref{E:12},
\begin{equation}\label{E:23}
  J_{321}<\infty.
\end{equation}
 As for $J_{322}$,
note that if $x,y$ are such that $\sqrt{|x|^2+|y|^2}\ge\frac{1}{\beta}$, then
\begin{equation}\label{E:21}
  1\leq\beta\sqrt{|x|^2+|y|^2}\le \beta(|x|+|y|).
\end{equation}
 If   $c$ is sufficiently large, then the following chain holds for a suitable constant $C$:
 \begin{align}\label{E:24}
    J_{322}
    & \le \int\int_{\{|x|>1, |y|>1, |x-y|< \frac{ |x|+|y|}{2}, \sqrt{|x|^2+|y|^2}\ge\frac{1}{\beta}\}} A\left(\frac{3|x-y|^{1-s}}
        {\log^{\frac{1}{\gamma}}\left(\kappa+\beta\left(|x|+|y|\right)\right)   \beta\left(|x|+|y|\right)} \right) \; \frac{dx\, dy} {|x-y|^n}
            \\ \nonumber
    & \le \int\int_{\{|x|>1, |y|>1, |x-y|< \frac{ |x|+|y|}{2}, \sqrt{|x|^2+|y|^2}\ge\frac{1}{\beta}\}} A\left(\frac{3}
        {\log^{\frac{1}{\gamma}}\left(\kappa+\beta\left(|x|+|y|\right)\right) \beta\left(|x|+|y|\right)^{s}} \right) \; \frac{dx\, dy} {\left(|x|+|y|\right)^n}
            \\ \nonumber
    & = \int\int_{\{|x|>1, |y|>1, |x-y|< \frac{ |x|+|y|}{2}, \sqrt{|x|^2+|y|^2}\ge\frac{1}{\beta}\}} e^{-C\log\left(\kappa+\beta\left(|x|+|y|\right)\right)\left(|x|+|y|\right)^{\gamma s}}
         \; \frac{dx\, dy} {\left(|x|+|y|\right)^n}
            \\ \nonumber
   & \le \int\int_{\{|x|>1, |y|>1\}} \kappa^{-C\left(|x|+|y|\right)^{\gamma s}}
         \; \frac{dx\, dy} {\left(|x|+|y|\right)^n} <\infty,
\end{align}
 where the first inequality holds owing to ~\eqref{E:19}, \eqref{E:21}, \eqref{A51}, and the second one by   property~\eqref{feb22}.
Equations~\eqref{E:22}--\eqref{E:24} ensure that
\begin{equation}\label{E:25}
    J_{32}<\infty.
\end{equation}
It remains to estimate $J_{31}$. The following chain holds, provided that $\kappa$ is sufficiently large:
\begin{align}\label{A56}
    J_{31}
        & \le \int\int_{\{|x|>1, |y|>1, |x-y|\geq \frac{ |x|+|y|}{2}\}}  A\left( \frac{2|v(x)|}{|x-y|^s} \right) \; \frac{dx\, dy}{|x-y|^n}
            \\ \nonumber
        & \quad + \int\int_{\{|x|>1, |y|>1, |x-y|\geq \frac{ |x|+|y|}{2}\}}  A\left( \frac{2|v(y)|}{|x-y|^s} \right) \; \frac{dx\, dy}{|x-y|^n}
            \\ \nonumber
        & = 2 \int\int_{\{|x|>1, |y|>1, |x-y|\geq \frac{ |x|+|y|}{2}\}}  A\left( \frac{2|v(x)|}{|x-y|^s} \right) \; \frac{dx\, dy}{|x-y|^n}
            \\ \nonumber
        & \le 2 \int\int_{\{|x|>1, |y|>1, |x-y|\geq \frac{ |x|+|y|}{2}\}}  A\left( \frac{2}{\log^{\frac{1}{\gamma}}\left(\kappa+1\right)|x-y|^s} \right) \; \frac{dx\, dy}{|x-y|^n}
            \\ \nonumber
        & \le 2^{n+1} \int\int_{\{|x|>1, |y|>1, |x-y|\geq \frac{ |x|+|y|}{2}\}}  A\left( \frac{2^{s+1}}{\log^{\frac{1}{\gamma}}\left(\kappa+1\right)\left(|x|+|y|\right)^s} \right) \; \frac{dx\, dy}{\left(|x|+|y|\right)^n}
            \\ \nonumber
        & \le 2^{n+1} \int\int_{\{|x|>1, |y|>1\}}  e^{-\frac{\log(\kappa+1)}{2^{(s+1)\gamma}}\left(|x|+|y|\right)^{\gamma s}} \; \frac{dx\, dy}{\left(|x|+|y|\right)^n}
            \\ \nonumber
        & = 2^{n+1} \int\int_{\{|x|>1, |y|>1\}}  (\kappa+1)^{-\left(\frac{\left(|x|+|y|\right)^{s}}{2^{(s+1)}}\right)^{\gamma}}  \; \frac{dx\, dy}{\left(|x|+|y|\right)^n}<\infty.
\end{align}
Property~\eqref{A53} follows from~\eqref{A54}, \eqref{E:13}, \eqref{A14}, \eqref{A17}, \eqref{A19}, \eqref{A20}, \eqref{E:25} and~\eqref{A56}.
\\ We conclude by proving that, if $\lambda \in (1,2)$, then
\begin{equation}\label{E:27bis}
  \lim_{s\to0^+}s\int_{\R^{n}} \int_{\R^{n}} A\left(\frac{|v(x)- v(y)|}{\lambda |x-y|^s}\right)\; \frac{dx\,dy}{|x-y|^{n}} = \infty.
\end{equation}
To this purpose, note that, given $\sigma\in(0,1)$,
\begin{align}\label{E:28bis}
    &\int_{\R^{n}}  \int_{\R^{n}} A\left(\frac{|v(x)- v(y)|}{\lambda |x-y|^s}\right)\; \frac{dx\,dy}{|x-y|^{n}}
            \\ \nonumber
        & \ge \int\int_{\{|x|>1,x_1>\sigma|x|, |y|>1, y_1<-\sigma|y|\}} A\left(\frac{|v(x)- v(y)|}{\lambda |x-y|^s}\right)\; \frac{dx\,dy}{|x-y|^{n}}
            \\ \nonumber
        & = \int\int_{\{|x|>1,x_1>\sigma|x|, |y|>1, y_1<-\sigma|y|\}} A\left(\frac{1}{\lambda}\left(\frac{x_1}{|x|\log^{\frac{1}{\gamma}}\left(\kappa+|x|\right)}
            - \frac{y_1}{|y| \log^{\frac{1}{\gamma}}\left(\kappa+|y|\right) }\right)\frac{1}{|x-y|^s}\right)\; \frac{dx\,dy}{|x-y|^{n}}
            \\ \nonumber
        & \ge \int\int_{\{|x|>1,x_1>\sigma|x|, |y|>1, y_1<-\sigma|y|\}}
         A\left(\frac{2\sigma}{\lambda\log^{\frac{1}{\gamma}}\left(\kappa+|x| +|y|\right)}\frac{1}{\left(|x|+|y|\right)^{s}}\right)\; \frac{dx\,dy}{ (|x|+|y|)^n}
            \\ \nonumber
        & = C_{\sigma,n}\int_{1}^{\infty}\int_{1}^{\infty}
         A\left(\frac{2\sigma}{\lambda \log^{\frac{1}{\gamma}}\left(\kappa+\varrho+r\right)}\frac{1}{\left(\varrho+r\right)^{s}}\right)\; \frac{\varrho^{n-1}r^{n-1}}{\left(\varrho+r\right)^{n}}d\varrho\,dr
\end{align}
for some positive constant $C_{\sigma,n}$ depending on $\sigma$ and $n$. Note that the last equality follows on making use of the polar coordinates in the integral with respect to $x$ and in the integral with respect to $y$, owing to the fact that the integrand is a radial function in $x$ and $y$, respectively, and that each of the sets $\{|x|>1,x_1>\sigma |x|\}$ and $\{|y|>1,y_1<-\sigma |y|\}$ is the intersection of the exterior of a ball centered at $0$ with a cone whose vertex is also $0$. Via the change of variables
  $\xi=\varrho+r, \eta=\varrho-r$,
we obtain that
\begin{align}\label{E:29bis}
     \int_{1}^{\infty}&\int_{1}^{\infty}
         A\left(\frac{2\sigma}{\lambda \log^{\frac{1}{\gamma}}\left(\kappa+\varrho+r\right)}\frac{1}{\left(\varrho+r\right)^{s}}\right)\; \frac{\varrho^{n-1}r^{n-1}}{\left(\varrho+r\right)^{n}}d\varrho\,dr
                \\ \nonumber
    & = \frac{1}{2} \int_{2}^{\infty}\int_{2-\xi}^{-2 +\xi}
       A\left(\frac{2\sigma}{\lambda \log^{\frac{1}{\gamma}}\left(\kappa+\xi\right) }\frac{1}{\xi^{s}}\right)\; \frac{\left(\xi^{2}-\eta^{2}\right)^{n-1}}{4^{n-1}\xi^{n}}d\eta\,d\xi.
\end{align}
Given $\alpha \in (0, 2)$, if $\xi>\frac{4}{2-\alpha}$   and $2-\xi \le \eta \le \xi-2$, then $\xi^2-\eta^2 \ge \xi^2-(\xi-2)^2 =  4 \xi-4>\alpha\xi$.
Thereby, on choosing $\kappa$  large enough,  one has that
\begin{align}\label{E:31bis}
    \int_{2}^{\infty}&\int_{2-\xi}^{-2 +\xi}
          A\left(\frac{2\sigma}{\lambda\log^{\frac{1}{\gamma}}\left(\kappa+\xi\right)}\frac{1}{\xi^{s}}\right)\; \frac{\left(\xi^{2}-\eta^{2}\right)^{n-1}}{\xi^{n}}d\eta\,d\xi
                 \\ \nonumber
    & \ge \int_{\frac{4}{2-\alpha}}^{\infty}\int_{2-\xi}^{-2+\xi}
          A\left(\frac{2\sigma}{\lambda\log^{\frac{1}{\gamma}}\left(\kappa+\xi\right)}\frac{1}{\xi^{s}}\right)\; \frac{\left(\alpha\xi\right)^{n-1}}{\xi^{n}}d\eta\,d\xi
               >\alpha^{n} \int_{\frac{4}{2-\alpha}}^{\infty}
         A\left(\frac{2\sigma}{\lambda\log^{\frac{1}{\gamma}}\left(\kappa+\xi\right)}\frac{1}{\xi^{s}}\right)\,d\xi
                \\ \nonumber
              & = \alpha^{n} \int_{\frac{4}{2-\alpha}}^{\infty} e^{-\left(\frac{\lambda}{2\sigma}\right)^{\gamma}\log(\kappa+\xi)\xi^{\gamma s}}\,d\xi
                  =  \alpha^{n} \int_{\frac{4}{2-\alpha}}^{\infty} \frac{d\xi}{(\kappa+\xi)^{\left(\frac{\lambda}{2\sigma}\right)^{\gamma}\xi^{\gamma s}}}
                 \\ \nonumber
    & = \frac{ \alpha^{n}}{s} \int_{\left(\frac{4}{2-\alpha}\right)^{s}}^{\infty} \frac{t^{\frac{1}{s}}}{(\kappa+t^{\frac{1}{s}})^{\left(\frac{\lambda}{2\sigma}\right)^{\gamma}t^{\gamma}}}\frac{dt}{t}.
\end{align}
Now, fix any $\sigma \in (\frac{\lambda}{2}, 1)$.
Then $\left(\frac{\lambda}{2\sigma}\right)^{\gamma}<1$. Also, $\left(\frac{\lambda}{2\sigma}\right)^{\gamma}t^{\gamma}<1$ if $t<\frac{2\sigma}{\lambda}$. Thus,
\begin{equation}\label{E:32bis}
  \frac{\chi_{((\frac{4}{2-\alpha})^{s},\frac{2\sigma}{\lambda})}(t)
  t^{\frac{1}{s}}}{(\kappa+t^{\frac{1}{s}})^{\left(\frac{\lambda}{2\sigma}\right)^{\gamma}t^{\gamma}}} \nearrow \infty
    \quad \text{as} \quad  s\searrow 0^+ \quad \text{for $t \in (1, \tfrac{2\sigma}{\lambda})$}.
\end{equation}
Equation \eqref{E:27bis} follows from~\eqref{E:28bis}, \eqref{E:29bis}, \eqref{E:31bis} and \eqref{E:32bis}, via the monotone convergence theorem for integrals.
Altogether, we have shown that the conclusions of the theorem hold with $u=\frac{v}{\lambda}$ for any $\lambda\in(1,2)$.
\end{proof}

\section*{Compliance with Ethical Standards}\label{conflicts}

\subsection*{Funding}

This research was partly funded by:

\begin{enumerate}
\item Research Project 201758MTR2  of the Italian Ministry of Education, University and
Research (MIUR) Prin 2017 ``Direct and inverse problems for partial differential equations: theoretical aspects and applications'';
\item GNAMPA of the Italian INdAM -- National Institute of High Mathematics
(grant number not available);
\item Grant P201-18-00580S of the Czech Science Foundation;
\item Deutsche Forschungsgemeinschaft (DFG, German Research Foundation) under Germany's Excellence Strategy - GZ 2047/1, Projekt-ID 390685813.
\end{enumerate}

\subsection*{Conflict of Interest}

The authors declare that they have no conflict of interest.


{\color{black}


\begin{thebibliography}{999}



\bibitem{ACPS_frac}
\by{\name{A.}{Alberico}, \name{A.}{Cianchi}, \name{L.}{Pick} \et \name{L.}{Slav\'ikov\'a}}
\paper{Fractional Orlicz-Sobolev embeddings}
\jour{preprint,  arXiv:2001.05565}
\endprep

\bibitem{ACPS_limit1}
\by{\name{A.}{Alberico}, \name{A.}{Cianchi}, \name{L.}{Pick} \et \name{L.}{Slav\'ikov\'a}}
\paper{On the limit as $s \to 1^-$ of possibly non-separable fractional Orlicz-Sobolev spaces}
\jour{preprint}
\endprep


\bibitem{BO}
\by{\name{S.}{Bahrouni}\et\name{H.}{Ounaies}}
\paper{Embedding theorems in the fractional Orlicz-Sobolev space and applications to non-local problems}
\jour{Disc. and Cont. Dynamical Systems, to appear}
\endprep




\bibitem{BBM1}
\by{\name{J.}{Bourgain}, \name{H.}{Brezis} \et \name{P.}{Mironescu}}
\book{Another look at Sobolev spaces}
\publ{in: J.L. Menaldi, E. Rofman, A. Sulem
(Eds.), Optimal Control and Partial Differential Equations, IOS Press, Amsterdam, 2001, pp. 439--455}
\endbook


\bibitem{BBM2}
\by{\name{J.}{Bourgain}, \name{H.}{Brezis} \et \name{P.}{Mironescu}}
\paper{Limiting embedding theorems for $W_{s,p}$ when $s\uparrow 1$ and applications.
Dedicated to the memory of Thomas H. Wolff
}
\jour{J. Anal. Math.}
\vol{87}
\yr{2002}
\pages{77--101}
\endpaper

\bibitem{CMSV}
\by{\name{M.}{Capolli}, \name{A.}{Maione}, \name{A.M.}{Salort} \et \name{E.}{Vecchi}}
\paper{Asymptotic behaviours in Fractional Orlicz-Sobolev spaces on Carnot groups}
\jour{preprint,  arXiv:1912.08357}
\endprep


\bibitem{cianchi_Ibero}
\by{\name{A.}{Cianchi}} \paper{Optimal Orlicz-Sobolev embeddings} \jour{Rev. Math. Iberoamericana} \vol{20} \yr{2004}
\pages{427--474}
\endpaper

\bibitem{NBS}
\by{\name{P.}{De N\'apoli}, \name{J.}{Fern\'andez Bonder}\et\name{A.M.}{Salort}}
\paper{A P\'olya--Szeg\H{o} principle for general fractional Orlicz--Sobolev spaces}
\jour{preprint,  arXiv:1903.03190}
\endprep



\bibitem{BonderSalort}
\by{\name{J.}{Fernandez-Bonder} \et \name{A.M.}{Salort}}
\paper{Fractional order Orlicz-Sobolev spaces}
\jour{J. Funct. Anal.}
\vol{227}
\yr{2019}
\pages{333--367}
\endpaper

\bibitem{MS}
\by{\name{V. G.}{Maz'ya} \et\name{T.}{Shaposhnikova}}
\paper{On the Bourgain, Br\'ezis and Mironescu theorem concerning limiting embeddings of fractional Sobolev spaces} \jour{J. Funct. Anal.} \vol{195} \yr{2002} \pages{230--238}
\endpaper

\bibitem{Sa}
\by{ \name{A.M.}{Salort}}
\paper{Eigenvalues and minimizers for a non-standard growth non-local operator}
\jour{J. Diff. Equat., to appear}
\endprep




\bibitem{ST}
\by{\name{W.}{Sickel} \et\name{H.}{Triebel}}
\paper{H\"older inequalities and sharp embeddings in function spaces of Bspq and Fspq type} \jour{Z. Anal. Anwendungen} \vol{14} \yr{1995} \pages{105--140}
\endpaper






\end{thebibliography}
\end{document}